\newtheorem{theorem}{Theorem}[section]
\newtheorem{lemma}[theorem]{Lemma}
\newtheorem{remark}[theorem]{Remark}
\theoremstyle{definition}
\newtheorem{example}[theorem]{Example}
\newcommand{\be}{\begin{equation}}
\newcommand{\ee}{\end{equation}}
\newcommand{\bfig}{\begin{figure}}
\newcommand{\efig}{\end{figure}}
\newcommand{\bt}{\begin{table}}
\newcommand{\et}{\end{table}}
\newcommand{\bc}{\begin{center}}
\newcommand{\ec}{\end{center}}
\newcommand{\ba}{\begin{array}}
\newcommand{\ea}{\end{array}}
\newcommand{\bes}{\begin{equation*}}
\newcommand{\ees}{\end{equation*}}
\newcommand{\sss}{\eta}
\newcommand{\N}{\mathbb{N}}
\newcommand{\R}{\mathbb{R}}
\newcommand{\Rn}{\mathbb{R}^{n}}
\newcommand{\irn}{\int_{\Rn}}
\newcommand{\Par}{\mathcal P}
\renewcommand{\a}{\alpha}
\renewcommand{\b}{\beta}
\renewcommand{\d}{\delta}
\newcommand{\e}{\varepsilon}
\newcommand{\g}{\gamma}
\renewcommand{\l}{\lambda}
\newcommand{\n}{\nabla}
\def\div{\nabla\cdot}
\renewcommand{\t}{\tau}
\newcommand{\p}{\partial}
\DeclareMathOperator{\supp}{supp}
\DeclareMathOperator{\card}{card}
\DeclareMathOperator{\diam}{diam}
\begin{document}

\title{Geometry of minimizers for the interaction energy \\ with mildly repulsive potentials}
\author{J. A. Carrillo}
\address{Department of Mathematics, Imperial College London, South Kensington Campus, London SW7 2AZ, UK.}
\email{carrillo@imperial.ac.uk}
\author{A. Figalli}
\address{ETH Z{\"u}rich, Department of Mathematics, R{\"a}mistrasse 101, CH-8092 Z{\"u}rich, Switzerland.}
\email{alessio.figalli@math.ethz.ch}
\author{F. S. Patacchini}
\address{Department of Mathematics, Imperial College London, South Kensington Campus, London SW7 2AZ, UK.}
\email{f.patacchini13@imperial.ac.uk}

\date{\today}

\maketitle

\begin{abstract}
We show that the support of any local minimizer of the interaction energy consists of isolated points whenever the interaction potential is of class $C^2$ and mildly repulsive at the origin; moreover, if the minimizer is global, then its support is finite. In addition, for some class of potentials we prove the validity of a uniform upper bound on the cardinal of the support of a global minimizer. Finally, in the one-dimensional case, we give quantitative bounds.
\end{abstract}

\section{Introduction}
\label{sec:intro}

Consider the \emph{interaction energy} $E \colon \Par(\Rn) \to \R \cup \{+\infty\}$, defined on the set of Borel probability measures $\Par(\Rn)$ by
\begin{equation} \label{eq:energy}
	E(\mu) = \frac 12 \irn\irn W(x-y) \, d\mu(x)\,d\mu(y) \quad \mbox{for all $\mu \in \Par(\Rn)$},
\end{equation}
where $W\colon \Rn \to \R\cup\{+\infty\}$ is an \emph{interaction potential}. The study of local and global minimizers of energies of the form \eqref{eq:energy} has in recent years been in the spotlight of applied mathematics, in particular in the context of the variational approach to partial differential equations. The main reason for this interest is that $E$ is a Lyapunov functional for the continuity equation
\begin{equation*}
	\partial_t\mu = \div (\mu \n W * \mu) \quad \mbox{on $\Rn$, for $t>0$},
\end{equation*}
called the \emph{aggregation equation}, where $*$ is the convolution operator and $\mu\colon [0,\infty) \to \Par(\Rn)$ is here a probability curve. These equations describe the continuum behavior of agents interacting via the potential $W$, and are at the core of many applications ranging from mathematical biology to granular media and economics, see \cite{TBL,mogilner1999non,HP,Tos,BC} and the references therein. They can also be obtained as dissipative limits of hydrodynamic equations for collective behavior \cite{LT}.

Typically, interaction potentials are repulsive towards the origin and attractive towards infinity; this reproduces the ``social", or natural, behavior of the agents that are usually considered in applications. In \cite{BCLR} the authors showed that the dimension of the support of a minimizer of $E$ is directly related to the repulsiveness of the potential at the origin, i.e., to the strength of the repulsion of two very close particles. More precisely, the stronger the repulsion (up to Newtonian), the higher the dimension of the support. In particular, in the case of \emph{mild repulsion}---when the potential behaves like a power of order $\a$, with $\a > 2$, near the origin---the Hausdorff dimension of each smooth enough component of the support has to be zero, see \cite[Theorem 2]{BCLR}. The smoothness assumption on the connected components of the support was essential in the proof, hence several open problems immediately arise: Is it possible to have minimizers whose supports lie on sets of non-integer Hausdorff dimension? Or can the support have integer dimension but non-smooth components? And, assuming one can prove that it is of zero dimension, is the support discrete?

In this work we give a conclusive answer to all these questions. Let us first mention that extensive simulations \cite{d2006self,ring1,ring2,BCLR,ABCV,CH} showed that fractal supports were not numerically observed and, moreover, these numerical simulations were consistently giving minimizers supported on finite numbers of points. This is precisely the rigorous result we show in this work under suitable assumptions on the repulsive-attractive potential $W$. In the rest of the paper we always assume that $W$ is radially symmetric, of class $C^2$, and such that
\be\label{eq:hyp-basics}
	\mbox{$W(0)=0$, and there is $R > 0$ s.t. $W(x) < 0$ if $0< |x| < R$, and $W(x) \geq 0$ if $|x| \geq R$.}
\ee
Let us write $w(|x|):=W(x)$ for all $x\in\Rn$. Remark that, by convention, $w$ being repulsive (resp. attractive) at $r>0$ means $w'(r)<0$ (resp. $w'(r)>0$). We suppose that $W$ is mildly repulsive, that is,
\be\label{eq:hyp-power}
	\mbox{there exist $\a>2$ and $C>0$ such that $w'(r)r^{1-\a} \to -C$ as $r\to0$}.
\ee
Note that, since $w(0)=0$, \eqref{eq:hyp-power} implies that $w(r)r^{-\a} \to -C/\a$ as $r\to0$.

When we refer to minimizers of the energy \eqref{eq:energy}, we either refer to \emph{global} minimizers, in which case no underlying topology is required, or to \emph{local} minimizers, in which case we need to specify the topology. In \cite{BCLR} it was proven that a natural topology to obtain suitable Euler-Lagrange conditions is that induced by the \emph{$\infty$-Wasserstein distance}. We define the $\infty$-Wasserstein distance between two probability measures $\mu$ and $\nu$ by
\be\label{eq:wass}
	d_\infty(\mu,\nu) = \inf_{\pi \in \Pi(\mu,\nu)} \sup_{(x,y) \in \supp\pi} |x-y|,
\ee
where $\Pi(\mu,\nu)$ is the space of probability measures on $\Rn\times\Rn$ with first marginal $\mu$ and second marginal $\nu$. We therefore say that $\mu \in \Par(\Rn)$ is a \emph{$d_\infty$-local minimizer} of $E$ if there exists $\e > 0$ such that $E(\mu) \leq E(\nu)$ for all $\nu \in \Par(\Rn)$ with $d_\infty(\mu,\nu) < \e$. We refer to \cite{BCLR,CDM,Villani} and the references therein for a good account on the properties of this distance and its relation to more classical metrics in optimal transport.

The Euler-Lagrange conditions for $d_\infty$-local minimizers of $E$ were used to give necessary and sufficient conditions on repulsive-attractive potentials to have existence of global minimizers \cite{CCP,SST}, and to analyze the regularity of the $d_\infty$-local minimizers for potentials which are as repulsive as, or more singular than, the Newtonian potential \cite{CDM}. In both cases, $d_\infty$-local minimizers are solutions of some related obstacle problems for Laplacian or nonlocal Laplacian operators, implying that they are bounded and smooth in their supports, or even continuous up to the boundary \cite{CDM,CV}. Similar Euler-Lagrange equations were also used for nonlinear versions of the Keller-Segel model in order to characterize minimizers of related functionals \cite{CCV}.

Our main theorem in this work is the following.
\begin{theorem} \label{thm:atomic}
	Let $\mu\in\Par(\Rn)$ be a $d_\infty$-local minimizer of the interaction energy with $E(\mu) < \infty$. Then every point in the support of $\mu$ is isolated. If moreover $\mu$ is a global minimizer, then the support of $\mu$ consists of finitely many points.
\end{theorem}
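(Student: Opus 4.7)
My plan is to argue by contradiction: assuming that some $x_0 \in \supp \mu$ is not isolated, I aim to construct a competitor $\nu$ with $d_\infty(\mu, \nu)$ arbitrarily small and $E(\nu) < E(\mu)$. First, I would establish the standard Euler--Lagrange conditions for $d_\infty$-local minimizers: there exists $\varepsilon_0 > 0$ such that the potential $U := W * \mu$ satisfies $U(x) = 2 E(\mu)$ for every $x \in \supp \mu$, and $U(x) \geq 2 E(\mu)$ whenever $\dist(x, \supp \mu) < \varepsilon_0$. Since $W \in C^2(\mathbb{R}^n)$, also $U \in C^2(\mathbb{R}^n)$, so at every $x \in \supp \mu$ one has $\nabla U(x) = 0$ and $D^2 U(x)$ positive semi-definite. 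Next, I would pick a sequence $x_k \in \supp \mu \setminus \{x_0\}$ with $x_k \to x_0$, set $\xi_k := |x_k - x_0|$ and $e_k := (x_k - x_0)/\xi_k$, and extract a subsequence so that $e_k \to e \in S^{n-1}$. The identity $U(x_0) = U(x_k) = 2 E(\mu)$ together with $\nabla U(x_0) = 0$ and a second-order Taylor expansion gives $e_k^\top D^2 U(x_0) e_k = O(\xi_k)$, and in the limit $e^\top D^2 U(x_0) e = 0$ which, combined with the PSD property, forces $D^2 U(x_0) e = 0$.

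I would then introduce a \emph{pincer} perturbation: for small $\tau > 0$, replace the mass of $\mu$ in a tiny neighborhood of $x_0$ (resp.\ $x_k$) by its translate by $\tau e_k$ (resp.\ $-\tau e_k$), obtaining a measure $\nu_\tau$ with $d_\infty(\nu_\tau, \mu) \leq \tau$. By the first-order Euler--Lagrange identity together with the antisymmetry of $\nabla W$, the derivative of $E(\nu_\tau)$ at $\tau = 0$ vanishes. Expanding to second order in $\tau$ and using the mild-repulsion asymptotics $w(r) \sim -\frac{C}{\alpha} r^\alpha$ and $w''(r) \sim -C(\alpha - 1) r^{\alpha - 2}$ near $r = 0$, I expect a decisive self-interaction gain of order $-C(\alpha - 1)\, m_0\, m_k\, \xi_k^{\alpha - 2}\, \tau^2$ (where $m_0, m_k$ are the cluster masses) against a Hessian penalty of order $O(\xi_k)\, \tau^2$ coming from $D^2 U$ at the two endpoints. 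For $\alpha \in (2, 3)$ the repulsive gain $\xi_k^{\alpha - 2}$ strictly dominates $\xi_k$ as $k \to \infty$, so $E(\nu_\tau) < E(\mu)$ for large $k$ and small $\tau$, contradicting local minimality. For the finite-support statement in the global case, one additionally uses that $\supp \mu$ is compact when $\mu$ is a global minimizer---which follows from the attractive-at-infinity condition $W \geq 0$ on $\{|x| \geq R\}$ together with $E(\mu) < \infty$---so that isolation of every point in a compact set immediately implies finitely many atoms.

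The hardest part will be handling the regime $\alpha \geq 3$, where the linear bound $O(\xi_k)$ on the Hessian penalty no longer strictly dominates the repulsive gain $\xi_k^{\alpha - 2}$. In that range I expect one must extract finer Taylor information on $U$ at $x_0$---for instance by applying the pincer analysis simultaneously to several pairs $(x_0, x_k)$ along independent limit directions and leveraging the vanishing $D^2 U(x_0) e = 0$ together with the PSD condition---or by using a more elaborate competing configuration, such as a non-rigid contraction of the mass in a small ball around $x_0$, so as to close the argument uniformly across the full range $\alpha > 2$ and in all dimensions.
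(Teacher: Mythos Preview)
Your proposal follows a genuinely different route from the paper, and as written it has real gaps that you yourself partly anticipate.

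\textbf{Where the argument breaks.} First, the bound $e_k^\top D^2 U(x_0)\,e_k = O(\xi_k)$ requires $U\in C^{2,1}$ (hence essentially $W\in C^3$), whereas only $W\in C^2$ is assumed; with $C^2$ you only get $o(1)$, which is too weak for the order comparison you want. Second, the mass bookkeeping in the pincer is not under control: the self--interaction gain scales like $m_0 m_k\,\xi_k^{\alpha-2}\tau^2$, while the Hessian penalty scales like $(m_0+m_k)\cdot(\text{something})\,\tau^2$. To keep the two moved blobs disjoint you must let the neighborhoods of $x_0$ and $x_k$ shrink with $\xi_k$, so $m_0,m_k\to 0$ in general, and the product $m_0 m_k$ can be much smaller than $m_0+m_k$; the comparison $\xi_k^{\alpha-2}$ versus $\xi_k$ alone does not settle the sign. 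Third, and most seriously, even granting both points above your scheme only closes for $\alpha\in(2,3)$, and the suggestions for $\alpha\ge 3$ (multi--pair pincers, nonrigid contractions) do not obviously produce a term that beats the Hessian penalty; you would need a mechanism that is insensitive to the precise order of vanishing.

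\textbf{What the paper does instead.} The paper avoids Taylor--order comparisons altogether. It first proves a second--variation inequality: for any signed $\nu$ with $\nu(\R^n)=0$ supported in $\supp\mu\cap B(x_0,\delta)$ one has $\iint W(x-y)\,d\nu\,d\nu\ge 0$. Testing with the \emph{three--point} measure $\nu_\lambda=-\delta_{y_0}+\lambda\delta_{y_1}+(1-\lambda)\delta_{y_2}$ and optimizing over $\lambda\in[0,1]$ yields the purely algebraic constraint
\[
\sqrt{-W(y_1-y_0)}+\sqrt{-W(y_0-y_2)}\ \ge\ \sqrt{-W(y_1-y_2)}.
\]
Rescaling and using only the asymptotics $w(r)\sim -\tfrac{C}{\alpha}r^\alpha$ turns this into a \emph{cone exclusion}: for any two nearby support points $y_0,y_1$, no third support point can lie in an open double cone $C_{\tau}(y_0,y_1)$ of aperture $\tau<\gamma_\alpha:=2^{1-2/\alpha}-1$, and $\gamma_\alpha>0$ for every $\alpha>2$. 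An accumulation point with a limiting direction $e$ violates this cone constraint, giving the contradiction. This argument uses only $W\in C^2$ and the behavior of $w$ and $w'$ near $0$, works uniformly in $\alpha>2$, and never needs to compare powers of $\xi_k$ or to track masses $m_0,m_k$.

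In short: your linearized competitor approach is natural but regime--dependent and mass--sensitive; the paper's three--point second--variation test converts local minimality into a scale--invariant geometric constraint that dispatches all $\alpha>2$ at once.
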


In Section \ref{sec:preliminary} we show some preliminary results concerning minimizers that are needed for the proof of Theorem \ref{thm:atomic}, which we give in Section \ref{sec:proof-main}. In Section \ref{sec:estimate} we prove upper estimates on the cardinal of the support of a global minimizer.


\section{Preliminary results}\label{sec:preliminary}
We give in this section the preliminary results needed to prove Theorem \ref{thm:atomic}. For every $\mu \in \Par(\Rn)$, let us write
$$
V_\mu(x) := W\ast\mu(x) = \irn W(x-y)\,d\mu(y) \quad \mbox{for all $x \in \Rn$}.
$$
By the continuity of $W$ it can be checked that, for any $\mu \in \Par(\Rn)$, $V_\mu\colon \Rn \to [0,+\infty]$ is lower semi-continuous. From \cite[Proposition 2]{BCLR} we have the following lemma.
\begin{lemma} \label{lem:min}
Let $\mu$ be a $d_\infty$-local minimizer of the interaction energy with $E(\mu)<\infty$. Then there exists $\e > 0$ such that any point $x_0 \in \supp\mu$ is a local minimizer of $V_\mu$ for the radius $\e$, that is, $V_\mu(x_0)\leq V_\mu(x)$ for every $x \in B(x_0,\e)$.
\end{lemma}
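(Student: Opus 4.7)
The plan is to exploit the $d_\infty$-local minimality via a small-mass translation perturbation, and then take a limit to recover the pointwise inequality $V_\mu(x_0) \leq V_\mu(x)$. Let $\varepsilon > 0$ be the radius of $d_\infty$-local minimality of $\mu$, fix $x_0 \in \supp \mu$ and any $x \in B(x_0, \varepsilon)$, and set the translation $\tau(y) := y + x - x_0$. Since $x_0 \in \supp\mu$, for every $\delta > 0$ the mass $m_\delta := \mu(B(x_0, \delta))$ is strictly positive. For such $\delta$ and an auxiliary parameter $t \in (0, 1]$, consider the perturbation
$$\eta := t\,\mu|_{B(x_0, \delta)}, \qquad \nu := (\mu - \eta) + \tau_{\#} \eta \in \Par(\Rn).$$
The transport plan which is the identity on $\mu - \eta$ and equals $\tau$ on $\eta$ yields $d_\infty(\mu, \nu) \leq |x - x_0| < \varepsilon$, hence by $d_\infty$-local minimality $E(\nu) \geq E(\mu)$.

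Next, expand the energy difference. By translation invariance of the kernel $(y, z) \mapsto W(y - z)$, the self-interaction of $\eta$ and of $\tau_{\#}\eta$ coincide, so the only surviving term is
$$0 \leq E(\nu) - E(\mu) = \int_{\Rn} \bigl[V_{\mu - \eta}(\tau(y)) - V_{\mu - \eta}(y)\bigr]\, d\eta(y).$$
Writing $V_{\mu - \eta} = V_\mu - V_\eta$ with $V_\eta = t\,(W \ast \mu|_{B(x_0, \delta)})$ linear in $t$, and using that $d\eta = t\,\mathbf{1}_{B(x_0,\delta)}\,d\mu$ so $|\eta| = t m_\delta$, one can divide by $|\eta|$: the $t$-factor in $d\eta$ cancels while the contribution of $V_\eta$ remains of order $t$, so letting $t \to 0^+$ (with $\delta$ fixed) gives
$$0 \leq \frac{1}{m_\delta} \int_{B(x_0, \delta)} \bigl[V_\mu(\tau(y)) - V_\mu(y)\bigr]\, d\mu(y) \qquad \text{for every small } \delta > 0.$$

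Finally, send $\delta \to 0^+$: the probability measure $m_\delta^{-1}\,\mu|_{B(x_0,\delta)}$ converges weakly to $\delta_{x_0}$, and its pushforward by $\tau$ converges weakly to $\delta_x$. Assuming $V_\mu$ is continuous at $x_0$ and at $x$, both integrals converge to $V_\mu(x_0)$ and $V_\mu(x)$ respectively, yielding $V_\mu(x) - V_\mu(x_0) \geq 0$, as claimed. The main technical obstacle is precisely this continuity of $V_\mu = W \ast \mu$ at the two relevant points, since a priori $V_\mu$ is only lower semicontinuous and could be $+\infty$; this will be handled by using $E(\mu) < \infty$, which implies $V_\mu \in L^1(\mu)$ and in particular $V_\mu < \infty$ $\mu$-a.e., together with the continuity of $W$ and a dominated convergence argument based on a locally uniform bound on $W$. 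A minor but essential role is played by the auxiliary parameter $t$: it ensures that the remainder $V_\eta$ is negligible in the limit even in the case when $\mu$ has an atom at $x_0$, for which otherwise $|\eta|$ would not tend to zero as $\delta \to 0$.
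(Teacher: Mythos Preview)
The paper does not give its own proof of this lemma; it simply cites \cite[Proposition~2]{BCLR}. Your outline follows the standard strategy behind that result (perturb $\mu$ by displacing a small piece of mass near $x_0$, expand the energy, and pass to the limit), and most of the computation is correct. There is, however, a real gap in your final limit.

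After you divide by $tm_\delta$ and let $t\to 0$, you obtain
\[
\frac{1}{m_\delta}\int_{B(x_0,\delta)} V_\mu(y)\,d\mu(y)\ \le\ \frac{1}{m_\delta}\int_{B(x_0,\delta)} V_\mu\bigl(y+(x-x_0)\bigr)\,d\mu(y),
\]
and you then want to let $\delta\to 0$. Lower semicontinuity of $V_\mu$ handles the left-hand side (its $\liminf$ is $\ge V_\mu(x_0)$), but for the right-hand side you need $\limsup\le V_\mu(x)$, i.e.\ upper semicontinuity of $V_\mu$ at $x$. Under the standing hypotheses ($W\in C^2$, bounded below but with unrestricted growth at infinity, $\mu$ only a $d_\infty$-local minimizer with no a priori moment or support bound), $V_\mu$ is only known to be lower semicontinuous, and your proposed fix via $E(\mu)<\infty$ and dominated convergence does not go through: $V_\mu\in L^1(\mu)$ says nothing about $V_\mu$ at a point $x$ that need not lie in $\supp\mu$, and a ``locally uniform bound on $W$'' does not produce a $\mu$-integrable majorant for $z\mapsto W(a-z)$ uniformly over $a$ near $x$.

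The fix is to change the competitor: instead of rigidly translating $\eta$ by $x-x_0$, transport the mass of $\eta$ to the single point $x$, i.e.\ set $\nu=(\mu-\eta)+|\eta|\,\delta_x$. The transport plan that sends each $y\in B(x_0,\delta)$ to $x$ gives $d_\infty(\mu,\nu)\le |x-x_0|+\delta<\varepsilon$ for $\delta$ small. The same expansion (first variation plus $O(t^2)$ remainder) now yields, after dividing by $t$ and letting $t\to 0$,
\[
\frac{1}{m_\delta}\int_{B(x_0,\delta)} V_\mu(y)\,d\mu(y)\ \le\ V_\mu(x),
\]
with $V_\mu(x)$ appearing directly and not as a limit. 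Sending $\delta\to 0$ and using only the lower semicontinuity of $V_\mu$ at $x_0$ gives $V_\mu(x_0)\le V_\mu(x)$. This is the argument in \cite{BCLR}; your translation variant introduces an unnecessary difficulty that your sketch does not actually overcome.
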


\begin{remark}
	The $\e$ in the definition of a $d_\infty$-local minimizer (see below \eqref{eq:wass}) and the one in Lemma \ref{lem:min} are the same. This directly comes from the proofs of \cite[Propositions 1 and 2]{BCLR}. 
\end{remark}
By \cite[Remark 3, Theorem 4]{BCLR} and the continuity of $W$ we get the result below.
\begin{lemma}\label{lem:v-mu}
Let $\mu$ be a $d_\infty$-local minimizer with $E(\mu) < \infty$. Let $K \subset \supp\mu$ be a bounded connected set. There exists $C_K\in \R$ such that the Euler-Lagrange condition writes
$$
V_\mu(x) = C_K \quad \mbox{for all $x \in K$}.
$$
If moreover $\mu$ is a global minimizer, then we know that $C_K = 2E(\mu)$, which is independent of $K$, and we have 
\begin{equation} \label{eq:euler}
\begin{cases} V_\mu(x) = 2E(\mu) & \mbox{for all $x \in \supp\mu$},\\
V_\mu(x) \geq 2E(\mu) & \mbox{for all $x\in\Rn$}. \end{cases}
\end{equation}
\end{lemma}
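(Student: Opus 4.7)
The plan is to first establish local constancy of $V_\mu$ on $K$ by combining Lemma \ref{lem:min} with the connectedness of $K$, and then to derive the sharper global-minimizer information via a standard convex perturbation argument. The first part handles any $d_\infty$-local minimizer; the second part uses the global-minimality assumption together with the local constancy already proved.

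For the first assertion, I would fix the $\e > 0$ provided by Lemma \ref{lem:min} (uniform across all points of $\supp\mu$, by the Remark that follows it) and fix $x_0 \in K$. For any $x \in K \cap B(x_0, \e/2)$, the uniform local-minimum property yields both $V_\mu(x_0) \leq V_\mu(x)$ (since $x \in B(x_0, \e)$) and $V_\mu(x) \leq V_\mu(x_0)$ (since $x_0 \in B(x, \e)$ and $x \in \supp\mu$). Hence $V_\mu \equiv V_\mu(x_0)$ on $K \cap B(x_0, \e/2)$, so $V_\mu$ is locally constant on $K$. Since $K$ is connected, any locally constant function on $K$ is constant: each level set $\{V_\mu = c\} \cap K$ is open in $K$, its complement is a union of open level sets and hence open, and connectedness forces only one level set to be non-empty. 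This yields a single $C_K \in \R$ with $V_\mu \equiv C_K$ on $K$.

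For the global-minimizer statement, I would use a convex perturbation: for each $x_0 \in \Rn$ and $t \in [0,1]$, set $\nu_t := (1-t)\mu + t\,\delta_{x_0}$. Expanding and using $W(0)=0$ gives
$$E(\nu_t) = (1-t)^2 E(\mu) + t(1-t)\, V_\mu(x_0).$$
If $V_\mu(x_0) < 2E(\mu)$ (in particular $V_\mu(x_0) < \infty$), then $\tfrac{d}{dt}E(\nu_t)\big|_{t=0^+} = V_\mu(x_0) - 2E(\mu) < 0$, contradicting global minimality; hence $V_\mu \geq 2E(\mu)$ on all of $\Rn$. By Fubini, $\int V_\mu \, d\mu = 2E(\mu)$, so equality in the bound $V_\mu \geq 2E(\mu)$ holds $\mu$-a.e. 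To upgrade this to every point of $\supp\mu$, I would argue by contradiction: if some $x_0 \in \supp\mu$ satisfied $V_\mu(x_0) > 2E(\mu)$, the first part would give $V_\mu \equiv V_\mu(x_0)$ on $B(x_0, \e/2) \cap \supp\mu$, a set of positive $\mu$-measure (since $x_0 \in \supp\mu$), which combined with $V_\mu \geq 2E(\mu)$ elsewhere would force $\int V_\mu \, d\mu > 2E(\mu)$, a contradiction. Therefore $V_\mu \equiv 2E(\mu)$ on $\supp\mu$, and in particular $C_K = 2E(\mu)$ independently of $K$.

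The only delicate point is the symmetric use of the local-minimum inequality at both endpoints of a pair $(x_0,x) \in K \times K$; this step crucially needs the radius $\e$ to be \emph{uniform} over $\supp\mu$, which is precisely what the Remark after Lemma \ref{lem:min} records. Beyond this, each step is essentially routine: the connectedness argument is elementary topology and the perturbation identity is a one-line convexity expansion.
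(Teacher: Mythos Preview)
The paper does not actually give its own proof of this lemma: it is stated as a direct consequence of \cite[Remark~3, Theorem~4]{BCLR} together with the continuity of $W$. Your argument is therefore a self-contained reconstruction of what that citation provides, and it is correct. The local-constancy step via the uniform radius from Lemma~\ref{lem:min} plus connectedness is exactly the mechanism behind \cite[Remark~3]{BCLR}; the convex interpolation $\nu_t=(1-t)\mu+t\delta_{x_0}$ and the Fubini identity $\int V_\mu\,d\mu=2E(\mu)$ are the standard route to the Euler--Lagrange conditions for global minimizers, which is \cite[Theorem~4]{BCLR}.

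One small imprecision worth tightening: in the final contradiction step you invoke ``the first part'' to get $V_\mu\equiv V_\mu(x_0)$ on $B(x_0,\e/2)\cap\supp\mu$, but the first part as you stated it applies to \emph{connected} $K$, and that intersection need not be connected. What you actually need (and already proved) is the pre-connectedness local argument: for any two points $x,x_0\in\supp\mu$ with $|x-x_0|<\e$, the two applications of Lemma~\ref{lem:min} give $V_\mu(x)=V_\mu(x_0)$ directly. This is only a wording issue, not a mathematical gap.
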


We now give the computation of the second variation of the energy functional \eqref{eq:energy}, which is a crucial point of the proof of Theorem \ref{thm:atomic}.

\begin{lemma} \label{lem:min-condition}
Let $\mu$ be a $d_\infty$-local minimizer of the interaction energy with $E(\mu) < \infty$. There exists $\d > 0$ such that for all $x_0 \in \supp\mu$ we have
\begin{equation} \label{eq:min-condition}
	\irn\irn W(x-y) \, d\nu(x)\,d\nu(y) \geq 0
\end{equation}
for any signed measure $\nu$ verifying $\supp\nu \subset \supp\mu \cap B(x_0,\d)$ and $\nu(\Rn) = 0$.
\end{lemma}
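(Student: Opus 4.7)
The plan is to produce, for each admissible $\nu$, a one-parameter family of admissible perturbations of $\mu$ tangent to $\nu$ and to read \eqref{eq:min-condition} off the local minimality of $\mu$. The naive candidate $\mu + t\nu$ need not be a nonnegative measure, since the Jordan negative part $\nu^-$ need not be dominated by $\mu$; the main obstacle is therefore to regularize $\nu$ against $\mu$ itself in a way that simultaneously (i) is dominated by $\mu$, (ii) stays inside the $d_\infty$-neighborhood in which $\mu$ is minimizing, and (iii) forces the first-order term in the expansion to vanish. To set the scale, let $\e > 0$ be the common radius given by Lemma \ref{lem:min} and by the definition of $d_\infty$-local minimizer, and put $\d := \e/4$. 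Then $V_\mu$ is constant on $S := \supp\mu \cap B(x_0, 2\d)$: any two points $x_1, x_2 \in S$ lie within distance $4\d = \e$, so Lemma \ref{lem:min} applied at each of them gives $V_\mu(x_1) \leq V_\mu(x_2)$ and the reverse.

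Fix $\nu$ satisfying the assumptions (the case $\nu = 0$ is trivial), write its Jordan decomposition $\nu = \nu^+ - \nu^-$ with $m := \nu^+(\Rn) = \nu^-(\Rn) > 0$, and for each $r \in (0,\d)$ introduce the $\mu$-regularizations
\[
	\tilde{\nu}_r^{\pm} := \int \frac{\mu|_{B(y,r)}}{\mu(B(y,r))}\,d\nu^{\pm}(y), \qquad \tilde{\nu}_r := \tilde{\nu}_r^+ - \tilde{\nu}_r^-.
\]
Each $\tilde{\nu}_r^{\pm}$ is a positive measure of mass $m$ supported in $\supp\mu \cap \overline{B(x_0,\d+r)} \subset S$, and is absolutely continuous with respect to $\mu$ with a bounded Radon--Nikodym derivative; boundedness uses that $y \mapsto \mu(B(y,r))$ is lower semi-continuous and strictly positive on the compact set $\supp\nu^{\pm} \subset \supp\mu$, hence bounded below by a positive constant. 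Consequently, for $t > 0$ small enough (depending on $r$), $\mu_t := \mu + t\tilde{\nu}_r$ is a probability measure, and the coupling that keeps $\mu - t\tilde{\nu}_r^-$ on the diagonal while transporting $t\tilde{\nu}_r^-$ to $t\tilde{\nu}_r^+$ shows $d_\infty(\mu,\mu_t) \leq 2(\d+r) < 4\d = \e$, so $\mu_t$ is an admissible competitor.

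The $d_\infty$-local minimality thus gives $E(\mu_t) \geq E(\mu)$, which upon expanding reads
\[
	0 \leq E(\mu_t) - E(\mu) = t \int V_\mu\,d\tilde{\nu}_r + \frac{t^2}{2}\, Q(\tilde{\nu}_r), \qquad Q(\sigma) := \irnrn W(x-y)\,d\sigma(x)\,d\sigma(y).
\]
Since $\supp\tilde{\nu}_r \subset S$ and $V_\mu$ is constant on $S$ (and finite there, because $E(\mu)<\infty$ forces $V_\mu$ to be finite on $\supp\mu$), the linear term equals that constant times $\tilde{\nu}_r(\Rn) = 0$, and dividing by $t^2$ yields $Q(\tilde{\nu}_r) \geq 0$. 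Finally, a dominated convergence argument (averages of a continuous test function over $B(y,r)$ against $\mu$ tend pointwise to its values on $\supp\mu$) shows that $\tilde{\nu}_r^{\pm} \rightharpoonup \nu^{\pm}$ weakly-$\ast$ as $r \to 0$, and continuity of $W$ on the compact set $\overline{S}\times\overline{S}$ upgrades this to $Q(\tilde{\nu}_r) \to Q(\nu)$, proving \eqref{eq:min-condition}.
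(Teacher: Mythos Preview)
Your proof is correct and rests on the same two ingredients as the paper's: the local minimality of $V_\mu$ from Lemma~\ref{lem:min} to eliminate the first-order term, and the regularization $\mu|_{B(y,r)}/\mu(B(y,r))$ to reduce to perturbations dominated by $\mu$.

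The organization, however, is genuinely different. The paper does not use a one-parameter family: it compares $\mu$ with a single competitor $\mu'=\mu_0+\mu|_{\Rn\setminus B_0}$ obtained by replacing $\mu|_{B_0}$ by an arbitrary $\mu_0$ with the same mass and support in $\supp\mu\cap B_0$, and manipulates the resulting energy inequality using only the \emph{inequality} $V_\mu(x)\le V_\mu(y)$ for $x\in\supp\mu\cap B_0$, $y\in B_0$, to absorb the cross terms. This yields \eqref{eq:min-condition} first for $\nu=\mu|_{B_0}-\mu_0$, hence for $|\nu|\le M\mu$, and only then is the regularization invoked, just for atomic $\nu$, followed by a further approximation of general $\nu$ by atoms. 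Your route is tighter: you observe right away that $V_\mu$ is actually \emph{constant} on $\supp\mu\cap B(x_0,2\delta)$, which makes the linear term in the expansion of $E(\mu+t\tilde\nu_r)$ vanish identically; you can then regularize an arbitrary $\nu$ in one step and pass to the limit. The paper's argument avoids appealing to constancy (only minimality) and never divides by $t^2$, at the cost of a longer chain of approximations; yours is the textbook second-variation computation, made possible by your sharper use of Lemma~\ref{lem:min}.
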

\begin{proof}
Let $\e > 0$ be the constant of Lemma \ref{lem:min} for the local minimizer $\mu$. Fix $x_0 \in \supp\mu$ and $\delta \leq \e/2$, and define 
$$
\mu':=\mu_0+\mu_1,
$$
where $\mu_0$ is an arbitrary nonnegative measure with
$$
\mu_0(\Rn) =\mu(B(x_0,\d)), \qquad \supp\mu_0\subset \supp\mu\cap B(x_0,\d),
$$
and $\mu_1$ is defined as
$$
\mu_1(A):=\mu(A\setminus B(x_0,\d)) \qquad \mbox{for any Borel set $A \subset \Rn$}.
$$
Clearly $\mu' \in \Par(\Rn)$ and $d_\infty(\mu,\mu') < \e$. Let us write $B_0:=B(x_0,\d)$. Then, since $E(\mu) \leq E(\mu')$,
\be\label{eq:min}
\begin{split}
	&\int_{B_0} \int_{B_0} W(x-y)\,d\mu(y)\,d\mu(x)
+2 \int_{B_0} \int_{\Rn\setminus B_0} W(x-y)\,d\mu(y)\,d\mu(x)\\
	&\phantom{{}={}}\leq\int_{B_0} \int_{B_0} W(x-y)\,d\mu_0(y)\,d\mu_0(x)
+2 \int_{B_0} \int_{\Rn\setminus B_0} W(x-y)\,d\mu(y)\,d\mu_0(x).
\end{split}
\ee
By Lemma \ref{lem:min}, 
$$
V_\mu(x) \leq V_\mu(y) \quad \text{for all $x \in \supp\mu\cap B_0$ and $y \in B_0$}.
$$
Then, since $\mu_0(\Rn) = \mu(B_0)$, it follows from integration against $\int_{B_0}\int_{B_0}\, d\mu_0(x)\,d\mu(y)$ that
\bes
\begin{split}
	&\int_{B_0}\int_{\Rn\setminus B_0} W(x-y)\,d\mu(y)\,d\mu_0(x)-\int_{B_0}
\int_{\Rn\setminus B_0} W(x-y)\,d\mu(y)\,d\mu(x)\\
	&\phantom{{}={}}\leq 
\int_{B_0}\int_{B_0} W(x-y)\,d\mu(y)\,d\mu(x)-\int_{B_0}
\int_{ B_0} W(x-y)\,d\mu(y)\,d\mu_0(x).
\end{split}
\ees
Hence \eqref{eq:min} implies
\bes
\begin{split}
&2\int_{B_0}
\int_{ B_0} W(x-y)\,d\mu(y)\,d\mu_0(x)\\
&\phantom{{}={}}\leq \int_{B_0}\int_{B_0} W(x-y)\,d\mu(y)\,d\mu(x)
+\int_{B_0}\int_{B_0} W(x-y)\,d\mu_0(y)\,d\mu_0(x).
\end{split}
\ees
Then the above inequality becomes
$$
\int_{B_0}\int_{B_0} W(x-y)\,d[\mu-\mu_0](y)\,d[\mu-\mu_0](x) \geq 0,
$$
which, by the arbitrariness of $\mu_0$, proves \eqref{eq:min-condition} for any signed measure $\nu$ with $\supp\nu \subset \supp\mu \cap B(x_0,\d)$, $\nu(\R^n) =0$ and $|\nu|\leq \mu$. Relaxing the last condition follows by approximation. More precisely, by bilinearity \eqref{eq:min-condition} remains true also when $|\nu|\leq M\mu$ for some $M<\infty$. Then, if $\nu$ is an atomic measure of the form $\nu=\sum_{i=1}^ka_i\delta_{x_i}$, for some $k\in\N$, with $x_i \in \supp\mu \cap B(x_0,\d)$ and $a_1+\cdots+a_k=0$, it suffices to approximate it with the restricted family of measures
$$
\nu_\eta:=\sum_{i=1}^k a_i \frac{\mu \lfloor_{B(x_i,\eta)}}{\mu(B(x_i,\eta))}, \quad \eta>0,
$$
to apply \eqref{eq:min-condition} to $\nu_\eta$ (since indeed one can easily see that $|\nu_\eta|\leq M_\eta\mu$ for some $M_\eta<\infty)$, and to let $\eta\to 0$. Finally, if $\nu$ is any signed measure verifying $\supp\nu \subset \supp\mu \cap B(x_0,\d)$ and $\nu(\R^n)=0$, then one can approximate it with atomic measures supported inside $\supp\mu \cap B(x_0,\d)$.
\end{proof}

\begin{remark} \label{rem:min-condition-global}
In Lemma \ref{lem:min-condition}, if $\mu$ is a global minimizer of the interaction energy, then the result is slightly stronger. Indeed, in this case, for all $x_0 \in \supp\mu$ we have
\begin{equation} \label{eq:min-condition-global}
	\irn\irn W(x-y) \, d\nu(x)\,d\nu(y) \geq 0
\end{equation}
for any signed measure $\nu$ verifying $\supp\nu \subset \supp\mu$ and $\nu(\Rn) = 0$. This can be seen simply by adapting the proof of Lemma \ref{lem:min-condition} to global minimizers.
\end{remark}

Remark \ref{rem:min-condition-global} yields the following lemma.
\begin{lemma} \label{lem:compact-support}
	Let $\mu$ be a global minimizer of the interaction energy. Then $\mu$ is compactly supported with $\diam(\supp \mu) \leq R$.
\end{lemma}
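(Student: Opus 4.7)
The plan is to apply the second-variation inequality of Remark \ref{rem:min-condition-global} to the simplest possible test measure, namely a two-atom signed measure $\nu = \delta_{x_1} - \delta_{x_2}$ for arbitrary $x_1,x_2 \in \supp\mu$, and read the sign of the resulting quadratic form through the sign condition \eqref{eq:hyp-basics} on $W$.

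First, I would fix $x_1, x_2 \in \supp\mu$ and define the signed measure $\nu := \delta_{x_1} - \delta_{x_2}$. Then $\supp\nu = \{x_1,x_2\} \subset \supp\mu$ and $\nu(\Rn) = 0$, so $\nu$ is admissible in Remark \ref{rem:min-condition-global}, which is valid since $\mu$ is global. Second, I would expand the double integral in \eqref{eq:min-condition-global} atom by atom; using that $W$ is even and $W(0)=0$, one finds
$$
\irnrn W(x-y)\,d\nu(x)\,d\nu(y) = 2W(0) - 2W(x_1-x_2) = -2W(x_1-x_2),
$$
and so Remark \ref{rem:min-condition-global} gives $W(x_1-x_2) \leq 0$. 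Third, I would invoke the hypothesis \eqref{eq:hyp-basics}: since $W(z) \geq 0$ for $|z| \geq R$, the inequality $W(x_1-x_2) \leq 0$ forces $|x_1 - x_2| \leq R$. Taking the supremum over $x_1, x_2 \in \supp\mu$ yields $\diam(\supp\mu) \leq R$. Compactness then follows for free: $\supp\mu$ is always closed in $\Rn$, and now it is bounded (being contained in $\overline{B(x_1,R)}$ for any $x_1 \in \supp\mu$), hence compact.

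There is no substantial obstacle here; once the quadratic-form interpretation of the second-variation condition is in hand, the rest is a one-line computation plus a direct use of the sign condition on $W$. The only mildly subtle point is reading \eqref{eq:hyp-basics} as asserting that $\{W \leq 0\} \subset \overline{B(0,R)}$ (so that $W(x_1-x_2)\leq 0$ genuinely implies $|x_1-x_2|\leq R$); if one is worried about potential zeros of $W$ outside $\overline{B(0,R)}$, one can either choose $R$ to be the largest radius for which \eqref{eq:hyp-basics} holds, or observe that a zero of $W$ at some point with $|z|>R$ would be a local minimum of the radial profile $w$, which can be perturbed away by a slightly smaller choice of $R$ in the hypothesis without affecting the conclusion.
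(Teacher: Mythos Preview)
Your proof is correct and follows exactly the same route as the paper: apply the global second-variation inequality of Remark~\ref{rem:min-condition-global} to $\nu=\delta_{x_1}-\delta_{x_2}$, deduce $W(x_1-x_2)\leq 0$, and conclude via \eqref{eq:hyp-basics}. Your closing remark about possible zeros of $W$ outside $\overline{B(0,R)}$ is a fair caveat, but it applies equally to the paper's own argument.
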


\begin{proof}
	Let $x,y\in\supp\mu$. Take $\nu = \delta_x - \delta_y$ in \eqref{eq:min-condition-global} and get that $W(x-y) \leq 0$. Hence the result follows from the assumption \eqref{eq:hyp-basics}.
\end{proof}


\section{Proof of Theorem \ref{thm:atomic}}\label{sec:proof-main}

Suppose that $\mu$ is a $d_\infty$-local minimizer of the interaction energy, and let $y_0 \in \supp\mu$. Then, given $\d>0$ smaller than the one in Lemma \ref{lem:min-condition}, either there are two different points $y_1,y_2 \in \supp\mu \cap B(y_0,\d)$, or there is nothing to prove.

\subsubsection*{Step 1.- Geometric constraint} We show that there is a geometric constraint for the points in the support of $\mu$ stemming from the minimality condition of Lemma \ref{lem:min-condition}. For convenience, let us write $y_0=0$, $y_1=x_1$ and $y_2=-x_2$, and consider the measure
$$
\nu_\l=-\delta_{0}+\l \delta_{x_1} +(1- \l) \delta_{-x_2} \quad \mbox{for any $\l \in [0,1]$.}
$$
By plugging $\nu_\l$ in \eqref{eq:min-condition} in place of $\nu$ we get
$$
\l(1-\l) W(x_1+x_2) \geq \l W(x_1)+(1-\l)W(x_2).
$$
By the assumption in \eqref{eq:hyp-basics}, we can choose $\d$ small enough such that $W(x_1),W(x_2)$ and $W(x_1+x_2)$ are negative. Then, set $a:=W(x_1)/W(x_1+x_2)$ and $b:=W(x_2)/W(x_1+x_2)$, so that the above inequality is equivalent to
\begin{equation} \label{eq:lambda-ab}
\l(1-\l) \leq \l a+(1-\l)b\qquad \forall\,\l\in[0,1].
\end{equation}
and implies
\begin{equation} \label{eq:ab}
\sqrt{a}+\sqrt{b}\geq 1.
\end{equation}
To see this, assume $|b-a|\leq 1$ (otherwise the inequality is trivial since $a,b \geq 0$ and therefore either $a$ or $b$ would be larger than $1$). Then, choosing $\l=(1+b-a)/2$, the inequality \eqref{eq:lambda-ab} becomes, after some rearrangement,
$$
(1-a)^2-2(1+a)b+b^2 \leq 0.
$$
This implies in particular that
$$
b \geq 1+a-2\sqrt{a} = (1-\sqrt{a})^2,
$$
which is the desired inequality \eqref{eq:ab}. Now, recalling the definition of $a$ and $b$, we get, from \eqref{eq:ab},
$$
\sqrt{-W(x_1)}+\sqrt{-W(x_2)} \geq \sqrt{-W(x_1+x_2)}.
$$
Assume, by homogeneity, that $x_1+x_2=pe_1$, where $e_1$ is the first unit vector of the orthonormal basis of $\Rn$, and $p > 0$ is a small rescaling parameter. Then the above inequality becomes
$$
\sqrt{-W(x_1)}+\sqrt{-W(pe_1 - x_1)} \geq \sqrt{-W(pe_1)}.
$$
Write $x_1=p(te_1 + y)$, where $y\in\Rn$ has zero first coordinate, and, by homogeneity, $t \in [0,1]$. Then, using that $|x_1|\leq pt+p|y|$ and $|pe_1-x_1|\leq p(1-t)+p|y|$, and that, for $p$ small enough, $x\mapsto\sqrt{-W(px)}$ is radially non-decreasing in $B(0,1+|y|)$, we get
$$
\sqrt{-w(p(t + |y|))} + \sqrt{-w(p(1-t+|y|))} \geq \sqrt{-w(p)}.
$$
Write $w_p(r) := w(pr)p^{-\a}$ for any $r\geq0$. Then, dividing the inequality above by $p^{\a/2}$ yields
$$
\textstyle{\sqrt{-w_p(t + |y|)} + \sqrt{-w_p(1-t+|y|)} \geq \sqrt{-w_p(1)}.}
$$
Observe that since the inequality above is invariant under the transformation $t \leftrightarrow 1-t$, we could have assumed $t\in [0,1/2]$ without loss of generality when writing $x_1=p(te_1+y)$. Define, for all $s \in [0,1]$ and $z \in \Rn$, and for $p>0$ small,
\bes
   \sss_p(s,z) =\textstyle{ \sqrt{-w_p(s + |z|)} + \sqrt{-w_p(1-s+|z|)} - \sqrt{-w_p(1)}.}
\ees
For any two $v,v'\in\Rn$ distinct, define the open set
\bes
	\mathcal{S}_p(v,v') := \left\{u\in\Rn : \pi_{(v,v')}u \in [v,v'] \;\; \mbox{and}\;\; \sss_p\left(\frac{|\pi_{(v,v')}  u - v|}{|v-v'|},\pi_{(v,v')} u - u\right) < 0 \right\},
\ees
where $\pi_{(v,v')}$ denotes the orthogonal projection on the line $(v,v')$.

Since the point $y_0$, taken above to be the origin, can be an arbitrary point of the support of $\mu$, we just proved the validity of the following geometric constraint: for any $y_0, y_1 \in \supp\mu$ such that $|y_0-y_1| = p$ with $p$ small, we have 
\bes
	\supp\mu \cap \mathcal{S}_p(y_0,y_1) =\emptyset. 
\ees

\subsubsection*{Step 2.- Asymptotic geometric constraint} Define, for all $s \in [0,1]$ and $z \in \Rn$,
\begin{equation} \label{eq:shape}
\sss_{0,\a}(s,z) = (s+|z|)^{\a/2}+(1-s+|z|)^{\a/2} - 1.
\end{equation}
For any two $v,v'\in\Rn$ distinct, define the open set
\bes
	\mathcal{S}_{0,\a}(v,v') := \left\{u\in\Rn : \pi_{(v,v')}u \in [v,v'] \;\; \mbox{and}\;\;\sss_{0,\a} \left(\frac{|\pi_{(v,v')}  u - v|}{|v-v'|},\pi_{(v,v')} u - u\right) < 0 \right\}.
\ees
By the assumption given in \eqref{eq:hyp-power}, and assuming with no loss of generality that $C=\a$, we have $w_p(r) \to -r^\a$ and $w_p'(r) \to -\a r^{\a-1}$ as $p\to0$, and
\be\label{eq:der-eta}
	\begin{cases}
		\sss_p \to \sss_{0,\a},\\
		\n \sss_p \to \n \sss_{0,\a},
	\end{cases} \mbox{pointwise as $p\to 0$}.
\ee
This shows that, for any $y_0, y_1 \in \supp\mu$ asymptotically close, we have
\begin{equation} \label{eq:empty2}
\supp\mu \cap \mathcal{S}_{0,\a}(y_0,y_1) = \emptyset
\end{equation}
The boundary $\{\sss_{0,\a}=0\}$ is represented in Figure \ref{fig:shape} for $n = 2$ and different values of $\a$. It is important to keep in mind that this shape only depends on $\a$ and not on the choice of the minimizer $\mu$.
\begin{figure}[!ht]
\centering
	\includegraphics[scale=0.65]{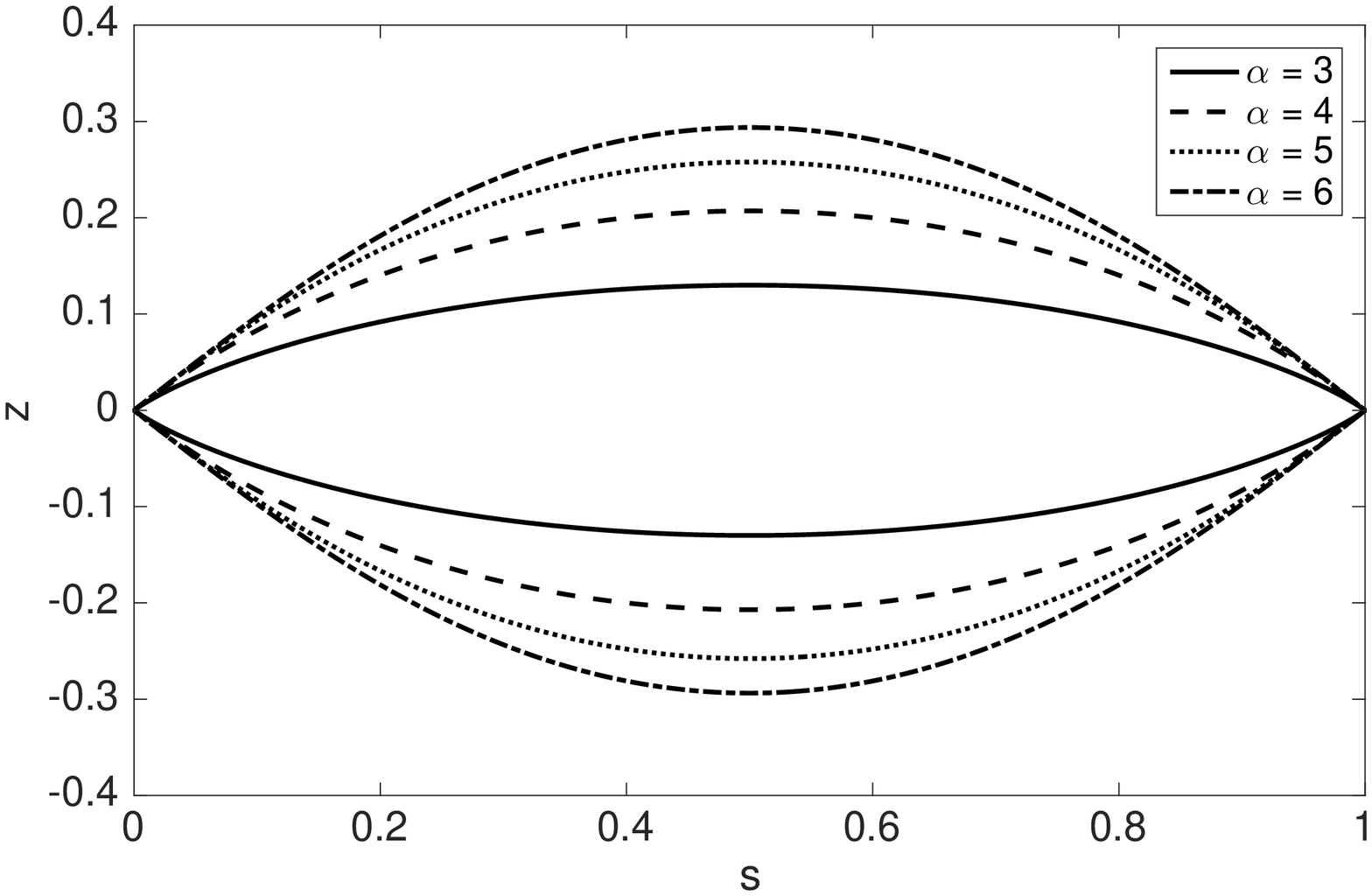}
	\caption{The boundary $\{\sss_{0,\a}=0\}$ for $n=2$ and different values of $\a$. \label{fig:shape}}
\end{figure}

\subsubsection*{Step 3.- Understanding the geometric constraint} 
Given $v,v' \in \Rn$, define the open ``double cone'' with opening $\t > 0$ by
\bes
C_\t(v,v'):=\left\{u\in\Rn : \pi_{(v,v')}u \in [v,v'] \; \mbox{and}\; |\pi_{(v,v')}u - u| < \t \min(|\pi_{(v,v')} u-v|,|\pi_{(v,v')} u-v'|) \right\}.
\ees
Since $\a>2$, then $\mathcal{S}_{0,\a}(v,v')$ is non-empty for any $v,v' \in \Rn$ distinct and $r\mapsto r^{\a/2}$ is a strictly convex function on $[0,\infty)$, which implies that $\mathcal{S}_{0,\a}(v,v')$ is a convex set. Thus, if we set
$$
\gamma_\a=\sup \{\tau > 0\,:\,C_\tau(v,v')\subset \mathcal S_{0,\a}(v,v')\},
$$
we see that $\g_\a$ can be computed using that $|z| = \g_\a s$ when $s = 1/2$ and $\sss_{0,\a}(s,z) = 0$ in \eqref{eq:shape}, and therefore we get
\bes
\g_\a = 2^{1-2/\a} - 1.
\ees
Note that $\g_\a$ only depends on $\a$ (and not on the choice of $v$ and $v'$). Therefore, for any $y_0,y_1 \in \supp \mu$ asymptotically close, \eqref{eq:empty2} gives
\bes
\supp\mu \cap C_{\g_\a}(y_0,y_1)=\emptyset.
\ees
Moreover, $\p_s \sss_{0,\a}(0,0) = -\a/2$ and $|\n \sss_{0,\a}(0,0)| = \a/\sqrt{2}$, so that no tangent plane to $\mathcal{S}_{0,\a}(v,v')$ at $s = 0$ and $z = 0$ contains $v'-v$. By symmetry, this also holds at $s=1$ and $z=0$. 

This, along with the convexity of $\mathcal{S}_{0,\a}(v,v')$ for any distinct $v,v'\in\Rn$ and \eqref{eq:der-eta}, gives us the final form of the geometric constraint needed in order to conclude: there exist $p'>0$ and $\t_{p'} < \g_\a$ such that, for all $p<p'$ and $y_0,y_1 \in \supp\mu$ with $|y_0-y_1| = p$, we have $C_{\t_{p'}}(y_0,y_1) \subset \mathcal{S}_p(y_0,y_1)$.

\subsubsection*{Step 4.- Conclusion} We now proceed by contradiction. Let us assume that $y_0$ is an accumulation point in $\supp\mu$. Then there exist a sequence $(x_k)_{k\in\N}\subset\supp\mu$ converging to $y_0$ and a unit direction $e$ such that 
$$
\frac{x_k-y_0}{|x_k-y_0|}\to e \quad \mbox{as $k\to\infty$}.
$$
This implies that if we fix $\hat k$ large enough so that $|x_{\hat k}-y_0|$ is sufficiently small and $x_{\hat k}-y_0$
is almost parallel to $e$,
then for $k \gg \hat k$ the point $x_k$ belongs to the cone 
$C_{\t_{p'}}(y_0,x_{\hat k})$,
a contradiction to Steps 2 and 3.

This proves that every point $y_0\in\supp\mu$ is isolated.
Finally, when $\mu$ is a global minimizer,
we know from Lemma \ref{lem:compact-support} that $\supp\mu$ is compact, hence $\supp\mu$ must be finite.

\begin{remark} \label{rem:gradient}
Let $n=2$. Then, $\n \sss_{0,\a}\to \a(-1/2,\pm1/2)$ at the origin, meaning that
the angle that 
tangent planes to the shape $\mathcal{S}_{0,\a}(x,y)$ make with the base of the shape at $s=z=0$ is $\pi/4$. By symmetry, this is also true at $s=1$ and $z=0$. 
\end{remark}


\section{Estimate on the cardinal of the support of a global minimizer}\label{sec:estimate}

We give here upper estimates on the cardinal of the support of a global minimizer of the interaction energy. We obtain a non-quantitative bound in any dimension, and a quantitative one in the one-dimensional case.

For any $m\in\N$ and unit direction $e$, we write
$$
D_e^m W(x) :=
\frac{d^m}{d\epsilon^m}\Big|_{\epsilon=0}W(x+\epsilon e).
$$
Note that, for every unit direction $e$, Lemma \ref{lem:min} implies that for any $d_\infty$-local minimizer $\mu$,
\begin{equation} \label{eq:minimality}
	\begin{cases} D_e^1 V_\mu(x) = D_e^1 W * \mu(x) = 0,\\ D_e^2 V_\mu(x) = D_e^2 W * \mu(x) \geq 0, \end{cases} \quad \mbox{for all $x\in \supp \mu$}.
\end{equation}

\subsection{Non-quantitative estimates in arbitrary dimension}\label{subsec:nonquant}

In addition to the hypotheses on the potential $W$ given in Section \ref{sec:intro}, we also consider the following assumption:
\be\label{eq:hyp-C4}
	\substack{\mbox{$W\in C^4_{\rm loc}(\Rn\setminus \{0\})$, $W$ satisfies \eqref{eq:hyp-power} with $\a<4$,}\\ \mbox{and for any $r>0$ there exists $M_r>0$ such that, for any unit direction $e$,}\\
	\mbox{$D^4_e W(x) \leq M_r$ for all $x \in B(0,r)\setminus\{0\}$.}}
\ee

We give below an example of potential satisfying \eqref{eq:hyp-C4}.
\begin{example}
	Assume that there exists $U\in C^4(\Rn)$ such that 
\be\label{eq:example-C4}
	W(x) = -|x|^\a/\a + U(x) \quad \mbox{for all $x\in\Rn$, with $3\leq\a<4$}.
\ee
We claim that $W$ satisfies \eqref{eq:hyp-C4}. For this, compute, for all $x\in\Rn$ and unit direction $e$, 
\bes
	D_e^4W(x) = -(\a-2) \Big(3|x|^{\a-4} + 6(\a-4)(x\cdot e)^2 |x|^{\a-6} + (\a-4)(\a-6)(x\cdot e)^4 |x|^{\a-8} \Big) + D_e^4U(x).
\ees
Therefore $W\in C^4(\Rn\setminus \{0\})$ and $D_e^4W$ blows up at the origin (since $\a<4$). We thus want the term within parentheses to be non-negative. By dilation, we can assume $|x|=1$. Then, since $(x\cdot e)^2 \in [0,1]$, it is necessary and sufficient to show that
\be\label{eq:f}
	f(\xi) := 3 - 6\b \xi + \b(\b+2) \xi^2 \geq 0 \quad \mbox{for all $\xi\in[0,1]$},
\ee
where $\b:=4-\a\in(0,1]$. Now, one can check that the minimum of $f$ is reached at $\bar\xi = 1$ and $f(\bar\xi) = (\b-3)(\b-1) \geq 0$, which proves the claim. We can also show that the condition $3\leq \a$ in \eqref{eq:example-C4} is sharp. Indeed, let $2<\a<3$; then $\beta \in (1,2)$, and the minimum of $f$ is reached at $\bar\xi = 3/(\beta +2) \in (3/4,1)$ and $f(\bar\xi) = 6(1-\beta)/(\b+2) < 0$, which violates \eqref{eq:f}.
\end{example}

\begin{lemma}\label{lem:hessian}
	Suppose that $W$ satisfies \eqref{eq:hyp-C4} and let $\mu\in\Par(\Rn)$ be a global minimizer of the interaction energy. Then $D_e^2 V_\mu(x) > 0$ for all $x \in \supp \mu$ and unit direction $e$.
\end{lemma}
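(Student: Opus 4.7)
First I would invoke Theorem~\ref{thm:atomic} to write $\mu = \sum_{i=0}^N m_i \delta_{x_i}$ as a finite atomic measure with positive weights $m_i>0$ and distinct points $x_i\in\Rn$. Fix $x_0\in\supp\mu$ and a unit direction $e$, and argue by contradiction: suppose $D_e^2 V_\mu(x_0) = 0$. Introduce the one-dimensional reduction
\be
g(\e) := V_\mu(x_0 + \e e) = m_0 W(\e e) + h(\e), \qquad h(\e) := \sum_{i=1}^N m_i\, W(x_0 - x_i + \e e).
\ee
Since $x_0 - x_i \neq 0$ for every $i\neq 0$, the smooth background $h$ is $C^4$ near $\e = 0$ by \eqref{eq:hyp-C4}, so the whole analysis reduces to a one-variable expansion of $g$.

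The strategy is to play global minimality against the singular power-law self-interaction $m_0 W(\e e)$. By Lemma~\ref{lem:v-mu} (specifically \eqref{eq:euler}), $V_\mu$ attains its global minimum at $x_0$, so upon symmetrizing
\be
F(\e) := g(\e) + g(-\e) - 2g(0) \geq 0 \qquad \mbox{for every } \e \in \R.
\ee
Hypothesis \eqref{eq:hyp-power} together with $\a > 2$ yields $W(0) = 0 = \nabla W(0) = D^2 W(0)$ and the expansion $W(\e e) = -C|\e|^\a/\a + o(|\e|^\a)$ as $\e \to 0$. Coupled with \eqref{eq:minimality} and the contradictory assumption, this forces $h'(0) = 0$ and $h''(0) = 0$. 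The fourth-order Taylor expansion of $h$ then gives
\be
h(\e) + h(-\e) - 2 h(0) = \frac{h''''(0)}{12}\,\e^4 + o(\e^4),
\ee
while the singular piece contributes $2 m_0 W(\e e) = -\tfrac{2 m_0 C}{\a}|\e|^\a + o(|\e|^\a)$. Since $\a < 4$ implies $\e^4 = o(|\e|^\a)$, the two expansions combine into
\be
F(\e) = -\frac{2 m_0 C}{\a}\,|\e|^\a + o(|\e|^\a),
\ee
which is strictly negative for small nonzero $\e$, contradicting $F \geq 0$ and closing the argument.

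\textbf{Main obstacle.} The entire argument hinges on a delicate balance between exponents. To expose the negative singular contribution as dominant, the smooth background $h$ must be Taylor-expanded past third order, and its first nontrivial even coefficient $\e^4 h''''(0)/12$ must still be beaten by $|\e|^\a$. Hypothesis \eqref{eq:hyp-C4} is tailored precisely for this: $C^4$ regularity of $W$ on $\Rn\setminus\{0\}$ supplies the expansion of $h$, while $\a < 4$ guarantees $\e^4 = o(|\e|^\a)$. The symmetrization $\e \leftrightarrow -\e$ is the device that discards the unknown odd-order residuals (notably $\e^3 h'''(0)/6$) which would otherwise obscure the sign of the leading contribution.
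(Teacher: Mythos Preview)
Your proof is correct. Both your argument and the paper's hinge on the same mechanism: under the contradictory assumption $D_e^2 V_\mu(x_0)=0$, the smooth background from the other atoms contributes only at order $\e^4$, while the singular self-interaction $m_0 W(\e e)\sim -\tfrac{C}{\a}|\e|^\a$ dominates because $\a<4$, producing a sign violation.

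The packaging differs, however. You work at the level of the potential $V_\mu$, invoking the Euler--Lagrange inequality \eqref{eq:euler} to get $F(\e)=V_\mu(x_0+\e e)+V_\mu(x_0-\e e)-2V_\mu(x_0)\geq 0$ and then contradicting it. The paper instead returns to the energy itself: it splits the mass at $x_1$ symmetrically into $\tfrac{m_1}{2}\delta_{x_1+(\e/2)e}+\tfrac{m_1}{2}\delta_{x_1-(\e/2)e}$, obtaining a competitor $\mu_\e$, and shows $E(\mu_\e)<E(\mu)$ for small $\e$. After expanding, the paper's difference $E(\mu_\e)-E(\mu)$ is (up to harmless constants) your $F(\e/2)$ with the self-interaction term $2m_0 w(\e)$ replaced by $\tfrac{m_1^2}{4}w(\e)$; the asymptotics are identical. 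Your route is slightly more economical since \eqref{eq:euler} already encapsulates the competitor argument, and you avoid invoking the uniform upper bound on $D_e^4 W$ from \eqref{eq:hyp-C4}---for a single fixed minimizer with finite support, $C^4_{\rm loc}(\Rn\setminus\{0\})$ suffices to make $h$ genuinely $C^4$ near $\e=0$. The paper's formulation, by contrast, tracks the constant $C'$ in terms of $M_R$, which is what feeds into the uniformity needed downstream in Theorem~\ref{thm:uniform-estimate}.
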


\begin{proof}
	By Theorem \ref{thm:atomic} we know that the support of $\mu$ is discrete. Also, by the minimality conditions \eqref{eq:minimality} we know that the Hessian of $V_\mu$ is positive semi-definite on the support of $\mu$. Let $\{x_k\}_{k\geq1} := \supp \mu$ and denote by $m_k$ the mass of particle $x_k$ for any $k\geq 1$. Without loss of generality, assume by contradiction that $D^2_e V_\mu(x_1) = 0$. We want to build a probability measure with smaller energy than $\mu$. Define, for all $\e > 0$,
$$
\mu_\e := \sum_{i\geq 2} m_i \d_{x_i} + \frac{m_1}{2}\d_{y_\e} + \frac{m_1}{2}\d_{z_\e},
$$
where $y_\e$ and $z_\e$ are distinct points belonging to the boundary of $B(x_1,\e/2)$ such that $x_\e-x_1$ and $y_\e - x_1$ are collinear to $e$, and which we can suppose to be different from $x_k$ for all $k\geq2$. Clearly $\mu_\e$ is the probability measure resulting from splitting the mass present at $x_1$ under $\mu$ and moving it evenly to two opposite points in the direction $e$ at a distance $\e/2$ from $x_1$. Then,
\bes
\begin{split}
	E(\mu_\e) &= \frac12\sum_{i\geq2}m_i\Bigg(\sum_{\substack{j\geq2\\j\neq i}} m_j W(x_i-x_j) + m_1 W(x_i-y_\e) + m_1 W(x_i - z_\e)\Bigg) + \frac{m_1^2}{4} W(y_\e - z_\e)\\
	&= E(\mu) +\frac{m_1}{2}\sum_{i\geq2}m_i \Bigg(W(x_i-y_\e) + W(x_i - z_\e) -2W(x_i-x_1)\Bigg) + \frac{m_1^2}{4} w(\e).
\end{split}
\ees
Note that, by \eqref{eq:hyp-basics}, \eqref{eq:hyp-power} and \eqref{eq:hyp-C4}, $D_e^2 W(0)=0$. Hence,
using the upper boundedness of $D_e^4 W$ (see \eqref{eq:hyp-C4}) and the bound on the diameter of $\supp\mu$ (see Lemma \ref{lem:compact-support}), a Taylor expansion yields
\bes
\begin{split}
	E(\mu_\e) &\leq E(\mu) + \frac{m_1}{8} \sum_{i\geq2}  m_i D^2_e W(x_i - x_1) \e^2 + \frac{m_1^2}{4} w(\e) + C'\e^4\\
	&= E(\mu) + \frac{m_1}{8} D^2_e V_\mu(x_1)\e^2 - \frac{m_1}{8} D^2_eW(0)\e^2 + \frac{m_1^2}{4} w(\e) + C'\e^4 = E(\mu) + \frac{m_1^2}{4} w(\e) + C'\e^4,
\end{split}
\ees
for some constant $C'\in\R$ depending on $M_R$. Then
$$
	E(\mu_\e) \leq E(\mu) - \frac{m_1^2}{4}(C/\alpha)\e^\a + C'\e^4,
$$
where $C$ is as in \eqref{eq:hyp-power}. Since $\a < 4$ we get $E(\mu_\e) < E(\mu)$ for $\e$ small enough, which contradicts the fact that $\mu$ is a global minimizer of $E$.
\end{proof}

\begin{theorem}\label{thm:uniform-estimate}
Let $(W_k)_{k\in\N}$ be a family of potentials, compact in $C^2(\Rn)$, and satisfying the assumptions in \eqref{eq:hyp-basics} and \eqref{eq:hyp-C4}
with uniform constants.
Then there exists $N\in\N$ such that any global minimizer $\mu_k\in\Par(\Rn)$ of the interaction energy for $W_k$ satisfies $\card(\supp\mu_k) \leq N$.
\end{theorem}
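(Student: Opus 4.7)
The plan is to argue by contradiction via a compactness argument, then exploit the strict positive-definiteness of the Hessian of $V_{\mu_\infty}$ provided by Lemma \ref{lem:hessian}. Suppose no uniform bound $N$ exists; then, along a subsequence, we have global minimizers $\mu_k$ (of the energy $E_k$ associated with $W_k$) with $\card(\supp\mu_k)\to\infty$. By Lemma \ref{lem:compact-support} and translation invariance of the energy, I may assume $\supp\mu_k\subset\overline{B(0,R)}$ for every $k$. The $C^2$-compactness of $(W_k)$ gives a subsequence with $W_k\to W_\infty$ in $C^2$ on bounded sets, and Prokhorov's theorem applied to the tight family $(\mu_k)$ then yields a further subsequence with $\mu_k\rightharpoonup\mu_\infty$ weakly, to a probability measure with $\supp\mu_\infty\subset\overline{B(0,R)}$. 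The uniform constants in \eqref{eq:hyp-basics} and \eqref{eq:hyp-C4} pass to the limit, so $W_\infty$ satisfies the same hypotheses. Passing to the limit in $E_k(\mu_k)\leq E_k(\nu)$ for compactly supported $\nu$ shows that $\mu_\infty$ is a global minimizer of the energy $E_\infty$ associated with $W_\infty$, and Theorem \ref{thm:atomic} then gives $\supp\mu_\infty=\{p_1,\ldots,p_m\}$ for some $m\in\N$.

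The next step is to establish $C^2$ convergence $V_{\mu_k}\to V_{\mu_\infty}$ uniformly on $\overline{B(0,2R)}$. Splitting
\bes
V_{\mu_k}(x)-V_{\mu_\infty}(x)=\int(W_k-W_\infty)(x-y)\,d\mu_k(y)+\int W_\infty(x-y)\,d(\mu_k-\mu_\infty)(y),
\ees
the first term is controlled by $\|W_k-W_\infty\|_{C^2(\overline{B(0,2R)})}\to 0$, while the second converges pointwise by weak convergence and uniformly by equicontinuity, since $W_\infty,\nabla W_\infty,D^2 W_\infty$ are uniformly continuous on $\overline{B(0,2R)}$; the identical argument applied to $\nabla W_k$ and $D^2 W_k$ yields the analogous conclusion for $\nabla V_{\mu_k}$ and $D^2 V_{\mu_k}$.

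By Lemma \ref{lem:hessian} applied to $W_\infty$ and $\mu_\infty$, $D^2 V_{\mu_\infty}(p_j)$ is positive definite for each $j$; let $\lambda>0$ be a common lower bound for its smallest eigenvalue across $j=1,\ldots,m$, obtained by continuity on the compact unit sphere. Pick $r>0$ so that the balls $B(p_j,r)$ are pairwise disjoint and $D^2V_{\mu_\infty}\geq(\lambda/2)I$ on each. By the $C^2$ convergence above, for $k$ sufficiently large $D^2V_{\mu_k}\geq(\lambda/4)I$ on every $B(p_j,r)$, so $V_{\mu_k}$ is strictly convex there; since each atom of $\mu_k$ is a local minimum of $V_{\mu_k}$ by Lemma \ref{lem:min}, each ball $B(p_j,r)$ contains at most one atom of $\mu_k$.

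It remains to rule out atoms of $\mu_k$ outside $\bigcup_j B(p_j,r)$. On the compact set $K:=\overline{B(0,R)}\setminus\bigcup_j B(p_j,r)$, the Euler-Lagrange condition \eqref{eq:euler} for $\mu_\infty$ gives $V_{\mu_\infty}>2E_\infty(\mu_\infty)$ on $K$, so by compactness $V_{\mu_\infty}\geq 2E_\infty(\mu_\infty)+\delta$ on $K$ for some $\delta>0$. Uniform convergence $V_{\mu_k}\to V_{\mu_\infty}$ on $K$ together with $E_k(\mu_k)\to E_\infty(\mu_\infty)$ then implies $V_{\mu_k}>2E_k(\mu_k)$ on $K$ for $k$ large, which via \eqref{eq:euler} forces $\supp\mu_k\subset\bigcup_j B(p_j,r)$, hence $\card(\supp\mu_k)\leq m$, contradicting $\card(\supp\mu_k)\to\infty$. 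The main obstacle I anticipate is the careful verification of the $C^2$ convergence of the convolutions $V_{\mu_k}$ and the check that $W_\infty$ genuinely inherits \eqref{eq:hyp-basics}--\eqref{eq:hyp-C4} from the uniform constants; once these pieces are in place, the combination of strict convexity near each $p_j$ with the Euler-Lagrange equality on the support closes the argument cleanly.
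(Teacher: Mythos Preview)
Your setup --- pass to a limit potential $W_\infty$ and a limit global minimizer $\mu_\infty$, then invoke Lemma~\ref{lem:hessian} --- is the same as the paper's, but the way you close the contradiction is different and contains a genuine gap.

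The problem is the sentence ``the Euler--Lagrange condition \eqref{eq:euler} for $\mu_\infty$ gives $V_{\mu_\infty}>2E_\infty(\mu_\infty)$ on $K$''. Equation~\eqref{eq:euler} only yields $V_{\mu_\infty}\ge 2E_\infty(\mu_\infty)$ everywhere, with equality on $\supp\mu_\infty$; nothing in the paper excludes contact points $x_0\notin\supp\mu_\infty$ where $V_{\mu_\infty}(x_0)=2E_\infty(\mu_\infty)$. Lemma~\ref{lem:hessian} gives strict positive-definiteness of $D^2V_{\mu_\infty}$ only \emph{at the atoms} $p_j$, and the second-variation inequality of Remark~\ref{rem:min-condition-global} applies only to signed measures supported \emph{in} $\supp\mu_\infty$, so neither tool rules out such degenerate contact points. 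Without strict inequality on $K$ you cannot force $\supp\mu_k\subset\bigcup_j B(p_j,r)$ for large $k$, and the bound $\card(\supp\mu_k)\le m$ does not follow.

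The paper avoids this localization issue entirely. Since $\card(\supp\mu_k)\to\infty$ inside a fixed compact set, it picks distinct $x_k,y_k\in\supp\mu_k$ converging to a common $x\in\supp\mu$, sets $e_k=(x_k-y_k)/|x_k-y_k|\to e$, and uses the first line of \eqref{eq:minimality} together with the mean value theorem:
\[
0=\frac{D^1_{e_k}V_{\mu_k}(x_k)-D^1_{e_k}V_{\mu_k}(y_k)}{|x_k-y_k|}=\int_0^1 D^2_{e_k}V_{\mu_k}\bigl(x_k+t(y_k-x_k)\bigr)\,dt\xrightarrow[k\to\infty]{} D^2_eV_\mu(x),
\]
contradicting Lemma~\ref{lem:hessian} directly. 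This requires only your $C^2$ convergence of $V_{\mu_k}$ (which you establish correctly) and no information whatsoever about $V_{\mu_\infty}$ away from $\supp\mu_\infty$.
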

\begin{proof}
	We proceed by contradiction. Since $\{W_k\}_{k\in\N}$ is compact in $C^2(\Rn)$ and satisfies the assumptions in \eqref{eq:hyp-basics} and \eqref{eq:hyp-C4}
with uniform constants, it has a limit $W$ satisfying \eqref{eq:hyp-basics} and \eqref{eq:hyp-C4}. Let $\mu_k\in \Par(\Rn)$ be global minimizers of the interaction energy associated to $W_k$, and assume by contradiction that $\card(\supp\mu_k) \to\infty$ as $k\to\infty$. By Lemma \ref{lem:compact-support}, up to a translation, $\supp\mu_k \subset B(0,R)$ for all $k\in\N$. Thus, up to a subsequence, $(\mu_k)_{k\in\N}$ converges narrowly to some $\mu\in\Par(\Rn)$ as $k\to\infty$, which, by lower semi-continuity of the energy (since $W$ is bounded from below and lower semi-continuous), is a global minimizer associated to $W$. We can pick $x_k,y_k\in\supp\mu_k$ for all $k\in\N$ such that the sequences $(x_k)_{k\in\N}$ and $(y_k)_{k\in\N}$ converge to some $x \in\supp\mu$ as $k\to\infty$. Let $e_k := (x_k-y_k)/|x_k-y_k|$ for all $k\in\N$, and, up to a subsequence, define $e:= \lim_{k\to\infty} e_k$. Thanks to the first line of \eqref{eq:minimality} and the $C^2$-continuity of $W$,
\bes
	0 = \frac{D_{e_k}^1 V_{\mu_k}(x_k) - D_{e_k}^1 V_{\mu_k}(y_k)}{|x_k-y_k|} = \int_0^1 D_{e_k}^2 V_{\mu_k}(x_k + t(y_k-x_k)) \, dt \xrightarrow[k\to\infty]{} D^2_{e} V_\mu(x).
\ees
However, Lemma \ref{lem:hessian} gives $D^2_{e}V_\mu(x)>0$, which leads to the wanted contradiction.
\end{proof}

\subsection{Quantitative estimate in one dimension}\label{subsec:n1}

We fix $n=1$. In addition to the assumptions discussed in Section \ref{sec:intro}, we consider the following:
\be\label{eq:hyp-convex}
	\mbox{There exists $r \in (0,R]$ such that $\sqrt{-w}$ is strictly convex on $(0,r)$.}
\ee

\begin{lemma} \label{lem:cond-1d}
	Suppose that $W$ satisfies \eqref{eq:hyp-convex}. Let $\mu$ be a global minimizer of the interaction energy. Then any interval of length strictly less that $r$ contains at most two points of $\supp\mu$.
\end{lemma}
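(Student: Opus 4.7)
My plan is to argue by contradiction: I would suppose that an interval of length strictly less than $r$ contains three distinct points $p_L<p_M<p_R$ of $\supp\mu$, and set $a:=p_M-p_L$ and $b:=p_R-p_M$, so that $a,b>0$ and $a+b<r\leq R$. By \eqref{eq:hyp-basics}, $w(a)$, $w(b)$ and $w(a+b)$ are then strictly negative.

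The first step would be to apply Remark \ref{rem:min-condition-global} to the signed measure $\nu_\l:=-\delta_{p_M}+\l\,\delta_{p_R}+(1-\l)\,\delta_{p_L}$, which has zero total mass and support in $\supp\mu$ for every $\l\in[0,1]$. Expanding $\iint W(x-y)\,d\nu_\l(x)\,d\nu_\l(y)\geq0$ and using $W(0)=0$ together with the radial symmetry of $W$ gives
\[
\l(1-\l)\,w(a+b)\geq \l\,w(b)+(1-\l)\,w(a)\qquad\text{for all }\l\in[0,1].
\]
This is precisely the relation that Step~1 of the proof of Theorem~\ref{thm:atomic} manipulates (with $x_1=b$ and $x_2=a$) to produce
\[
\sqrt{-w(a)}+\sqrt{-w(b)}\geq\sqrt{-w(a+b)}.
\]

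The second step would be to contradict this using \eqref{eq:hyp-convex}. Setting $f:=\sqrt{-w}$, continuity of $w$ and $w(0)=0$ extend $f$ to a continuous function on $[0,r)$ with $f(0)=0$ and $f>0$ on $(0,r)$. I claim that strict convexity of $f$ on $(0,r)$ forces the strict superadditivity $f(a)+f(b)<f(a+b)$ whenever $a,b>0$ satisfy $a+b<r$, immediately contradicting the inequality above. For $a\neq b$, convexity on $[0,r)$ (an easy consequence of strict convexity on $(0,r)$ and continuity at $0$) yields $f(a)\leq(a/(a+b))\,f(a+b)$ and $f(b)\leq(b/(a+b))\,f(a+b)$ by taking convex combinations with the origin, whence $f(a)+f(b)\leq f(a+b)$; equality would force the three points $(a,f(a))$, $(b,f(b))$, $(a+b,f(a+b))$ to be collinear on the line through the origin with slope $f(a+b)/(a+b)$, contradicting strict convexity on $(0,r)$. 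For $a=b$, I would pick $\e\in(0,a)$ small and chain two inequalities: midpoint convexity gives $2f(a)\leq f(a-\e)+f(a+\e)$, and the preceding case applied to the distinct values $a-\e$ and $a+\e$ gives $f(a-\e)+f(a+\e)<f(2a)$; combining yields $2f(a)<f(2a)$, as needed.

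The slightly delicate point is precisely this case $a=b$, since the hypothesis \eqref{eq:hyp-convex} asserts strict convexity only on the \emph{open} interval $(0,r)$; the perturbation by $\e$ handles it cleanly.
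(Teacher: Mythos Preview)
Your proof is correct and follows essentially the same route as the paper: derive $\sqrt{-w(a)}+\sqrt{-w(b)}\geq\sqrt{-w(a+b)}$ from the global second-variation inequality (Remark~\ref{rem:min-condition-global}) exactly as in Step~1 of the proof of Theorem~\ref{thm:atomic}, and then contradict it via strict superadditivity of $f=\sqrt{-w}$ coming from \eqref{eq:hyp-convex}. The paper simply writes the strict convexity inequality with endpoint $0$ directly, obtaining $f(x_i)<\tfrac{x_i}{x_1+x_2}f(x_1+x_2)$ for $i=1,2$ and summing; your version is more explicit about why the \emph{strict} inequality survives when one endpoint is $0\notin(0,r)$, and your separate treatment of the case $a=b$ is a safe (if slightly redundant) way to secure strictness---in fact one can show directly that $t\mapsto f(t)/t$ is strictly increasing on $(0,r)$, which covers both cases at once.
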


\begin{proof}
Without loss of generality, we take three distinct points $0,x_1,-x_2 \in \supp\mu$ with $-x_2<0<x_1$. By applying the reasoning in the proof of Theorem \ref{thm:atomic} to a global minimizer, we get
\begin{equation} \label{eq:sqrt-cond}
\sqrt{-w(x_1)}+\sqrt{-w(x_2)} \geq \sqrt{-w(x_1+x_2)}.
\end{equation}
By contradiction, suppose that $x_1+x_2\in (0,r)$. Then, since $\sqrt{-w}$ is strictly convex on $(0,r)$,
$$
	\sqrt{-w(x_1)} < \frac{x_1}{x_1+x_2}\sqrt{-w(x_1+x_2)} + \frac{x_2}{x_1+x_2}\sqrt{-w(0)},
$$
which, since $w(0) = 0$, yields
$$
	\sqrt{-w(x_1)} < \frac{x_1}{x_1+x_2}\sqrt{-w(x_1+x_2)}.
$$
Similarly, for $-x_2$,
$$
	\sqrt{-w(x_2)} < \frac{x_2}{x_1+x_2}\sqrt{-w(x_1+x_2)}.
$$
Thus, by adding up the last two inequalities,
$$
	\sqrt{-w(x_1)} + \sqrt{-w(x_2)} < \sqrt{-w(x_1+x_2)},
$$
which contradicts \eqref{eq:sqrt-cond}. Therefore $x_1+x_2 \geq r$, which is the desired result.
\end{proof}

As a direct consequence of Lemmas \ref{lem:compact-support} and \ref{lem:cond-1d}, we get the theorem below.
\begin{theorem} \label{thm:cond-1d}
	Suppose that $W$ satisfies \eqref{eq:hyp-convex}. Let $\mu\in\Par(\R)$ be a global minimizer of the interaction energy. Then $\card(\supp\mu) \leq 2\lceil R/r \rceil+1$.
\end{theorem}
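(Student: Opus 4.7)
The plan is to combine the diameter bound from Lemma \ref{lem:compact-support} (namely $\diam(\supp\mu) \leq R$) with the local packing bound from Lemma \ref{lem:cond-1d} (at most two support points in any interval of length strictly less than $r$) through a direct counting argument along the line.

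First, since $\supp\mu$ is finite by Theorem \ref{thm:atomic} and contained in an interval of length at most $R$, I would order its elements as $x_1 < x_2 < \cdots < x_N$. The contrapositive of Lemma \ref{lem:cond-1d} says that any three consecutive support points satisfy $x_{i+2} - x_i \geq r$: indeed, otherwise $[x_i, x_{i+2}]$ would be an interval of length $< r$ containing the three distinct points $x_i, x_{i+1}, x_{i+2}$, contradicting the lemma.

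Next, I would split by the parity of $N$ and telescope these inequalities over alternating gaps. If $N = 2m+1$, summing $x_{2k+3} - x_{2k+1} \geq r$ for $k = 0, \ldots, m-1$ yields $x_N - x_1 \geq m r$; together with $x_N - x_1 \leq R$ this gives $m \leq R/r$, hence $m \leq \lceil R/r \rceil$ and so $N \leq 2 \lceil R/r \rceil + 1$. If $N = 2m$, the same telescoping up to index $2m-1$ gives $x_{2m-1} - x_1 \geq (m-1) r$; combined with the strict inequality $x_{2m} > x_{2m-1}$ and $x_{2m} - x_1 \leq R$, this yields $(m-1)r < R$, so $m - 1 \leq \lceil R/r \rceil - 1$ (regardless of whether $R/r$ is an integer), and therefore $N = 2m \leq 2\lceil R/r \rceil$, which is even sharper than the stated bound.

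There is no genuine obstacle here; the result is an elementary counting consequence of the two lemmas. The only mild subtlety is checking that the parity of $N$ does not cause an off-by-one loss: the odd case is the binding one (saturated precisely when $R/r$ is an integer), while the even case is strictly better.
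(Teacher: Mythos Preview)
Your argument is correct and is precisely the explicit counting that the paper leaves implicit: the paper simply states the theorem as ``a direct consequence of Lemmas~\ref{lem:compact-support} and~\ref{lem:cond-1d}'' without spelling out the details, and your telescoping of the gaps $x_{i+2}-x_i\geq r$ against the diameter bound $x_N-x_1\leq R$ is the natural way to carry this out. The parity split is handled cleanly, with the odd case being the binding one as you note.
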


We give now an example of potential for which we can apply the result above to estimate the number of points in the support of a global minimizer. 
\begin{example}
Consider the power-law potential
\be\label{eq:pl}
	W(x) = \frac{|x|^a}{a} - \frac{|x|^b}{b} \quad \mbox{for all $x\in\R$ with $2< b<a$}.
\ee
We want to use Theorem \ref{thm:cond-1d} to estimate the number of points in the support of a global minimizer $\mu$ for the potential \eqref{eq:pl} as a function of $a$ and $b$. Clearly, $W$ satisfies the assumptions in \eqref{eq:hyp-basics}, with $R=R(a,b) = (a/b)^{1/(a-b)}$, and in \eqref{eq:hyp-power}. After some computation, we get that the condition in \eqref{eq:hyp-convex} is fulfilled for $r = r(a,b)$, where
\bes
	r(a,b) = \left(\frac{a(a-1) +b(b-1) - ab}{b(a-2)}-\sqrt{\left(\frac{a(a-1) +b(b-1) - ab}{b(a-2)}\right)^2- \frac{a(b-2)}{b(a-2)}}\right)^{1/(a-b)}
\ees
Then, Theorem \ref{thm:cond-1d} gives us an upper estimate on $\card(\supp\mu)$. To make it simpler, let us assume that $a=2b$. By injecting this into the above equation, we obtain
$$
	r(a,b) = r(b) = \left(\frac32 - \frac12\sqrt{\frac{5b-1}{b-1}}\right)^{1/b}.
$$
Since now $R(a,b) = R(b) = 2^{1/b}$, Theorem \ref{thm:cond-1d} gives
$$
	\card(\supp\mu) \leq 2\left\lceil\left( \frac{4\sqrt{b-1}}{3\sqrt{b-1}-\sqrt{5b-1}} \right)^{1/b}\right\rceil+1.
$$
Note that the right-hand side of the formula is infinity for the limit case $b=2$. When $b\to\infty$, however, the right-hand side of the above formula gives an upper bound of 5 points in the support.
\end{example}

\begin{remark}
When $W$ is a power-law potential as in \eqref{eq:pl} with even powers, we can estimate the number of points in the support of a global minimizer as follows: if $a>b>0$ are even integers and $\mu$ is a global minimizer, then $V_\mu$ is a polynomial of degree $a$.
Since \eqref{eq:euler} implies that every point in $\supp\mu$ is a double root of such polynomial, this cannot happen more than $a/2$ times, so that $\card(\supp\mu) \leq a/2$. Note that this estimate, unlike the one of Theorem \ref{thm:cond-1d}, is independent of $b$ and is thus valid also for the limit case $b=2$.
\end{remark}

\subsection*{Acknowledgments}
J.A.C. was partially supported by the Royal Society via a Wolfson Research Merit Award. 
A.F. was partially supported by NSF Grants DMS-1262411
and DMS-1361122.

\bibliography{geometry_global_minimisers}
\bibliographystyle{abbrv}

\end{document}